\let\set\mathbbm
\def\Ann{\operatorname{Ann}}
\newtheorem{thm}{Theorem}
\newtheorem{lemma}{Lemma}
\begin{document}

 \author[Manuel Kauers and Christoph Koutschan and Doron Zeilberger]
    {Manuel Kauers\,$^\ast$ and Christoph Koutschan\,$^\ast$ and Doron Zeilberger\,$^\dagger$}

 \address{Manuel Kauers, Research Institute for Symbolic Computation, J. Kepler University Linz, Austria}
 \email{mkauers@risc.uni-linz.ac.at}

 \address{Christoph Koutschan, Research Institute for Symbolic Computation, J. Kepler University Linz, Austria.}
 \email{ckoutsch@risc.uni-linz.ac.at}

 \address{Doron Zeilberger, Mathematics Department, Rutgers University (New Brunswick), Piscataway, NJ, USA.}
 \email{zeilberg@math.rutgers.edu}

 \thanks{$^\ast$ Supported in part by the Austrian FWF
  grants SFB F1305, P19462-N18, and P20162-N18.}
 \thanks{$^\dagger$ Supported in part by the USA National Science Foundation.}

 \title{Proof of Ira Gessel's Lattice Path Conjecture}

 \begin{abstract}
   We present a computer-aided, yet fully rigorous,  proof of Ira Gessel's tantalizingly simply-stated
   conjecture that the number of ways of walking $2n$ steps in the 
   region $x+y \geq 0, y \geq 0$ of the square-lattice
   with unit steps in the east, west, north, and south directions, that start and end at the origin, equals 
   $16^n\frac{(5/6)_n(1/2)_n}{(5/3)_n(2)_n}$ .
 \end{abstract}

 \keywords{Lattice Walks, Quarter Plane, Holonomic Ansatz.}

 \maketitle

 \def\gesselsteps{\{{\leftarrow},\penalty100{\rightarrow},\penalty100{\swarrow},\penalty100{\nearrow}\}}
 \def\krewerassteps{\{{\leftarrow},\penalty100{\downarrow},\penalty100{\nearrow}\}}

 \section{Introduction}

 There is a certain family of lattice walks, let's call them the Gessel walks, whose
 counting function is puzzling the combinatorialists already for several years.
 Gessel walks (that are trivially equivalent to the walks described in the abstract)
 are walks in the lattice $\set Z^2$ that stay entirely in the first
 quadrant (viz.\ they are walks in $\set N^2$) and that only consist of unit steps 
 chosen from $G:=\gesselsteps$.
 If $f(n;i,j)$ denotes the number of Gessel walks with exactly $n$~steps starting at the 
 origin $(0,0)$ and ending at the point $(i,j)$, then the counting function is 
 the multivariate power series
 \[
   F(t;x,y):=\sum_{n=0}^\infty\sum_{i=0}^\infty\sum_{j=0}^\infty f(n;i,j)x^i y^j t^n.
 \]
 We would call this power series holonomic (with respect to~$t$) if it satisfies an ordinary
 linear differential equation (with respect to~$t$) with polynomial coefficients in~$t,x,y$.
 This may or may not be the case.

 For example, the Kreweras walks are defined just like the Gessel walks, but with the unit steps
 chosen from $\krewerassteps$ instead of from~$G$. It is a classical result~\cite{kreweras65}
 that their counting function is holonomic.
 In constrast, Bousquet-M\'elou and Petkov\v sek showed that the counting function for certain 
 Knight walks is not holonomic~\cite{melou00}.
 Mishna~\cite{mishra06} provides a systematic study of all the possible walks in the quarter
 plane with steps chosen from any step set 
 $S\subseteq\{{\leftarrow},\penalty100{\nwarrow},\penalty100{\uparrow},\penalty100{\nearrow},\penalty100
 {\rightarrow},\penalty100{\searrow},\penalty100{\downarrow},\penalty100{\swarrow}\}$ 
 with $|S|=3$.
 She shows that the counting functions for the step sets 
 $\{{\nearrow},\penalty100{\searrow},\penalty100{\nwarrow}\}$ and 
 $\{{\nearrow},\penalty100{\searrow},\penalty100{\uparrow}\}$ (and some others that
 are equivalent to those by symmetry) are not holonomic while all others are holonomic.

 For the number of walks returning to the origin, there is sometimes a nice closed form representation,
 even if there is no such representation for the number of walks to an arbitrary point
 $(i,j)$. For instance, if $k(n;i,j)$ denotes the number of Kreweras walks of length~$n$
 from $(0,0)$ to $(i,j)$~\cite[A006335]{sloane95}, then~\cite{kreweras65}
 \[
   k(3n;0,0)=\frac{4^n}{(n+1)(2n+1)}\binom{3n}n\qquad(n\geq0)
 \]
 and $k(n;0,0)=0$ if $n$ is not a multiple of~$3$.
 
 Gessel~\cite{gessel} observed that a similar representation seems to exist for the number 
 $f(n;0,0)$ of Gessel walks returning to the origin~\cite[A135404]{sloane95}.
 He conjectured the following closed form representation.

 \begin{thm} Let $f(n;i,j)$ denote the number of Gessel walks going in $n$ steps from 
   $(0,0)$ to $(i,j)$. Then $f(n;0,0)=0$ if $n$ is odd and
   \[
    f(2n;0,0)=16^n\frac{(5/6)_n(1/2)_n}{(5/3)_n(2)_n}\qquad(n\geq0),
   \]
   where $(a)_n:=a(a+1)\cdots(a+n-1)$ denotes the Pochhammer symbol.
 \end{thm}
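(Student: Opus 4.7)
The plan is to combine a kernel-method setup with a holonomic (D-finite) ansatz amenable to computer algebra, exploiting that the conjectured closed form is hypergeometric and hence obeys a first-order P-recursive relation. First, classifying each nonempty walk by its last step and applying inclusion--exclusion to forbid westward steps from the $y$-axis and southwest steps from either axis, I would derive the functional equation
\begin{equation*}
K(t;x,y)F(t;x,y) = 1 - \frac{t(1+y^{-1})}{x}F(t;0,y) - \frac{t}{xy}F(t;x,0) + \frac{t}{xy}F(t;0,0),
\end{equation*}
with kernel $K(t;x,y) = 1 - t(x+x^{-1}+xy+x^{-1}y^{-1})$. This is the standard starting point; however, unlike the Kreweras case the kernel does not yield a closed-form solution of $F$ by elementary manipulation, which is largely why the conjecture has remained open.

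Second, in place of a direct analytic attack I would make a holonomic ansatz: conjecture that $F(t;x,y)$ is annihilated by linear differential and difference operators in $t,x,y$ with polynomial coefficients of bounded order and degree. Candidate operators can be searched for by tabulating many values $f(n;i,j)$ from the naive forward recursion and then solving a large, overdetermined linear system for their unknown coefficients. The crucial step is to certify each guessed operator rigorously: plugging the formal series into the annihilator and reducing modulo the functional equation must yield $0$ identically, a vast but finite polynomial check.

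Third, once $F$ has been proven holonomic in all three variables, the closure properties of D-finite series yield an ordinary linear differential equation in $t$ for the specialization $F(t;0,0) = \sum_n f(2n;0,0) t^{2n}$. Translating this ODE into a P-recurrence for $f(2n;0,0)$ and applying Gosper's algorithm, I would verify that the conjectured ratio $f(2n+2;0,0)/f(2n;0,0) = 16(n+5/6)(n+1/2)/((n+5/3)(n+2))$ is indeed satisfied; the base value $f(0;0,0)=1$ is immediate.

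The principal obstacle is the certification step. Guessing the annihilating operators is conceptually routine, but the operators are expected to have high order and polynomial coefficients of enormous total degree, so that the reduction modulo the functional equation becomes a linear-algebra problem of such size that neither its solvability nor its correctness is evident \emph{a priori}. Engineering the computation to terminate with the available resources, and producing a certificate that a skeptical reader can independently reproduce, is where the real work of the proof will lie.
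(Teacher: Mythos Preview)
Your outline shares the paper's spirit---guess operators, certify them rigorously, extract a univariate recurrence, and match it against the hypergeometric closed form---but the implementation differs and two of your steps have real gaps. The paper works purely on the discrete side: it guesses shift operators in $\mathbb{Q}[n,i,j][S_n,S_i,S_j]$ annihilating $f(n;i,j)$, certifies each one by iterated division with remainder against the trivial step-set operator $T=S_nS_iS_j-S_i^2S_j-S_j-S_i^2S_j^2-1$ (with a clean degree-drop termination argument), and then combines sixteen certified operators via a Takayama-style module Gr\"obner elimination into one of the quasi-holonomic form $P(n,S_n)+iQ_1+jQ_2$; setting $i=j=0$ yields the order-$32$ recurrence. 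In particular the paper does \emph{not} prove that $F(t;x,y)$ is holonomic in $x,y$---it explicitly flags this as open and defers it to Bostan--Kauers---so your route through full trivariate holonomy is strictly more ambitious than what is needed or done here.

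Your certification step, ``reduce modulo the functional equation,'' is not a finite polynomial check as written: the kernel equation couples $F(t;x,y)$ to the separate unknown boundary series $F(t;0,y)$, $F(t;x,0)$, and $F(t;0,0)$, so applying a candidate annihilator and reducing leaves residual expressions in those series that do not obviously vanish. To close this you would need certified annihilators for the boundary series as well and a coupled verification scheme---feasible, but a substantially larger undertaking than you indicate. The paper's discrete certification avoids this entirely because $T$ involves only shifts of the single object $f(n;i,j)$. Likewise, invoking ``closure properties of D-finite series'' to specialize to $x=y=0$ hides rather than bypasses the hard computation: producing an ODE in $t$ for $F(t;0,0)$ from a holonomic system for $F(t;x,y)$ is itself an elimination problem of exactly the Takayama flavor the paper carries out explicitly. (A minor point: Gosper's algorithm is not the relevant tool in step three; once you have a linear recurrence, you simply substitute the closed form and check it directly.)
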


 The purpose of the present article is to describe, to human beings, the proof of this theorem.
 The proof is accomplished by computing a homogeneous linear recurrence in $n$ for $f(n;0,0)$. 
 Then the statement follows directly by verifying that the right hand side satisfies
 the same recurrence and that the initial values match.
 Our recurrence has order~32, polynomial coefficients of degree~172, and involves
 integers with up to 385~decimal digits.
 As this is somewhat too much to be printed here (it would cover about 250~pages),
 we provide it electronically at
 \begin{center}
   http://www.math.rutgers.edu/\~{}zeilberg/tokhniot/Guessel2
 \end{center}
 which has the recurrence as a Maple procedure whose output being $0$ proves Gessel's conjecture
 (once the first 32 initial values are checked, but this has already been done by Gessel
 himself, when he formuated his conjecture) .

 Our result implies that $F(t;0,0)$ is holonomic with respect to~$t$, but it has no
 direct implications concerning the holonomy of $F(t;x,y)$ for other $x,y$ of interest,
 e.g., $x=y=1$, and all the more so for general $x$,~$y$.
 There is, however, strong evidence that even the general counting function $F(t;x,y)$
 with ``symbolic'' $x$ and $y$ is holonomic in~$t$, see~\cite{bostan08}.


 \section{The Quasi-Holonomic Ansatz}

 \subsection{Annihilating Operators}

 Let $S_n,S_i,S_j$ be the shift operators, acting on $f(n;i,j)$ in the natural way,
 e.g., $S_nf(n;i,j)=f(n+1;i,j)$.
 An annihilating operator of~$f(n;i,j)$ is an operator~$R$ with
 \[
   R(n,i,j,S_n,S_i,S_j)f(n;i,j)=0.
 \]
 Those operators belong to a noncommutative polynomial algebra $\set Q(n,i,j)[S_n,S_i,S_j]$,
 and together they form a left ideal in that algebra, called the \emph{annihilator} of $f(n;i,j)$.
 Note that for an annihilating operator we can always assume polynomial coefficients 
 instead of rational coefficients, i.e., that the operator belongs to $\set Q[n,i,j][S_n,S_i,S_j]$,
 by clearing denominators.  

 Our goal is to find an annihilating operator for Gessel's $f(n;i,j)$ that implies 
 the conjecture. 
 For example, it would be sufficient to know an annihilating operator 
 $R(n,i,j,S_n)$ free of the shifts $S_i$ and~$S_j$, because then
 $R(n,0,0,S_n)$ would be an annihilating operator for $f(n;0,0)$.

 For this reasoning to apply, we could actually be less restrictive and allow
 also shifts in $i$ and $j$ to occur in~$R$, as long as they disappear when $i$ and $j$
 are set to zero. Our goal, therefore, is to find operators $P,Q_1,Q_2$ such
 that
 \[
   R(n,i,j,S_n,S_i,S_j)=P(n,S_n)+i Q_1(n,i,j,S_n,S_i,S_j)
                                +j Q_2(n,i,j,S_n,S_i,S_j)
 \]
 annihilates~$f(n;i,j)$. This is the \emph{quasi-holonomic ansatz}~\cite{kauers07v}.

 \subsection{Discovering annihilating operators}\label{Guess}

 We search for annihilating operators by making, for some fixed~$d$, an ansatz 
 \[
  R = 
  \sum_{0\leq e_1,\dots,e_6\leq d} c_{e_1,\dots,e_6} n^{e_1} i^{e_2} j^{e_3} S_n^{e_4} S_i^{e_5} S_j^{e_6}
 \]
 with undetermined coefficients $c_{e_1,\dots,e_6}$. Applying this ``operator template'' to 
 $f(n;i,j)$ gives
 \[
   \sum_{0\leq e_1,\dots,e_6\leq d} c_{e_1,\dots,e_6} n^{e_1} i^{e_2} j^{e_3} f(n+e_4;i+e_5,j+e_6),
 \]
 which, when equated to zero for any specific choice of $n,i,j$ yields a linear constraint for the
 undetermined coefficients. (Note that $f(n;i,j)$ can be computed efficiently for
 any given $n,i,j\in\set Z$.)

 By taking several different $n,i,j$ we obtain a linear system of equations.
 If that system has no solution, then there is definitely no annihilating operator matching
 the template. If there are solutions, then these are candidates for annihilating
 operators. 

 We can clearly restrict the search to quasi-holonomic operators by leaving out unwanted
 terms in the ansatz for~$R$.

 \subsection{Verifying conjectured annihilating operators}\label{VerifyGuess}

 An algorithm was given in~\cite{kauers07v} for deciding whether some given operator 
 $R\in\set Q(n,i,j)[S_n,S_i,S_j]$ annihilates $f(n;i,j)$ or not. We repeat this algorithm
 for the sake of self-containedness.

 First note that the step set $\{\leftarrow,\rightarrow,\nearrow,\swarrow\}$ gives readily
 rise to the recurrence
 \[
   f(n+1;i,j) = f(n;i+1,j) + f(n;i-1,j) + f(n;i+1,j+1) + f(n;i-1,j-1),
 \] 
 and therefore the ``trivial operator''
 \[
   T:= S_nS_iS_j - S_i^2S_j - S_j - S_i^2S_j^2 - 1
 \]
 certainly annihilates~$f(n;i,j)$.
 Instead of checking that $R$ annihilates $f(n;i,j)$, we will check that 
 $TR$ annihilates~$f(n;i,j)$. By the following lemma, this is sufficient.

 \begin{lemma}
   Suppose that an operator $R$ is such that $(TR)f(n;i,j)=0$.
   Then it can be checked whether $Rf(n;i,j)=0.$   
 \end{lemma}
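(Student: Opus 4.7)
Set $g(n;i,j):=(Rf)(n;i,j)$. The hypothesis $(TR)f=0$ is exactly $Tg=0$, so $g$ satisfies the same nearest-neighbour step recurrence as $f$:
\[
  g(n+1;i,j) = g(n;i+1,j) + g(n;i-1,j) + g(n;i+1,j+1) + g(n;i-1,j-1).
\]
The plan is to prove $g(n;i,j)=0$ for all $n,i,j\ge0$ by induction on~$n$, reducing the verification to a finite collection of explicit checks.

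The base case $n=0$ is finite: $f(0;\cdot,\cdot)$ is supported only at the origin and $R$ has bounded shift support, so $g(0;\cdot,\cdot)$ is supported on a finite set whose values are computed and tested one by one. For the inductive step, suppose $g(n;i',j')=0$ for all $i',j'\ge0$. Evaluating the recurrence at $(n;i-1,j-1)$ for $i,j\ge1$ writes $g(n+1;i,j)$ as a sum of four values of $g$ at step~$n$ with non-negative indices, which vanish by the induction hypothesis. The axis values $g(n+1;i,0)$ and $g(n+1;0,j)$ are not handled by the interior recurrence alone: they involve, through the step recurrence at $(n;i-1,-1)$ or $(n;-1,j-1)$, the ``ghost'' values $g(n;i,-1)$ and $g(n;-1,j)$. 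The induction therefore closes provided we verify separately that $g(n;i,-1)=0$ and $g(n;-1,j)=0$ for every $n\ge0$ and every $i,j\ge0$.

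These auxiliary conditions reduce to finitely many univariate problems. Because $R$ has bounded shift support, only finitely many sequences $n\mapsto g(n;i,-1)$ (and symmetrically $n\mapsto g(n;-1,j)$) can be nontrivial. Each such sequence is, by construction, a fixed $\set Q[n]$-linear combination of shifts of $f$ evaluated along a coordinate hyperplane; from $T$ and $R$ one extracts by a noncommutative elimination in $\set Q[n,i,j][S_n,S_i,S_j]$ an explicit scalar recurrence in~$n$ that it satisfies. Vanishing of a P-recursive sequence is then certified by matching sufficiently many initial values against zero.

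The main obstacle is precisely that elimination step, because the shift operators $S_i$ and~$S_j$ do not commute with the specialisations $i=-1$ or $j=-1$, so one cannot simply substitute into $TR$. The algorithm of~\cite{kauers07v} performs the required elimination; its termination relies on the combinatorial finiteness of the support of $f(n;\cdot,\cdot)$ for each fixed $n$ together with the bounded shift structure of $R$.
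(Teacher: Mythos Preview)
The paper's argument is much shorter than yours: it simply notes that $Tg=0$ (where $g:=Rf$) means $g$ obeys the same step recurrence as~$f$, so $g$ is determined by its slice at $n=0$; and since $f(n;i,j)=0$ whenever $i>n$ or $j>n$, while $R$ has $S_n$-degree at most~$r_n$, the slice $g(0;\cdot,\cdot)$ vanishes automatically outside the box $0\le i,j\le r_n$, leaving only finitely many direct evaluations. No separate treatment of the axes is given.

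You try to make the inductive step explicit and single out the boundary values, but your proposed remedy contains a genuine error. You assert that ``only finitely many sequences $n\mapsto g(n;i,-1)$ can be nontrivial'' because $R$ has bounded shift support. That is false: the bounded shift support limits \emph{which entries of $f$} appear in $g(n;i,-1)$, not the set of~$i$ for which this expression is nonzero. Already $R=S_j$ gives $g(n;i,-1)=f(n;i,0)$, which is a nontrivial sequence in~$n$ for \emph{every} $i\ge0$. Hence the ``auxiliary conditions'' you need to verify form a genuinely two-parameter family, not finitely many univariate ones, and the reduction you sketch does not go through. Your last paragraph then appeals to an elimination procedure from~\cite{kauers07v} to produce scalar recurrences for these boundary sequences; but this is circular, since the present lemma is precisely the certification device that algorithm relies on---you cannot invoke that machinery in the course of justifying the lemma itself.
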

 \begin{proof}
   $(TR)f(n;i,j)=0$ implies that $T$ annihilates $Rf(n;i,j)$, i.e., $Rf(n;i,j)$ also
   satisfies the above recurrence. Therefore, in order to show that $Rf(n;i,j)=0$
   entirely, it suffices to show that $Rf(n;i,j)=0$ for $n=0$ and all $i$ and~$j$.
   If $r_n$ bounds the degree of $S_n$ in $R$, then it suffices to verify
   $Rf(n;i,j)=0$ for $n=0$ and $0\leq i,j\leq r_n$, because we clearly have $f(n;i,j)=0$ 
   for $i>n$ or $j>n$. This leaves us with checking finitely many values,
   which can be done.
 \end{proof}

 By the lemma, in order to check $Rf(n;i,j)=0$, it suffices to be able
 to check $(TR)f(n;i,j)=0$. 
 For checking the latter, compute operators $U,V$ with $TR=UT+V$ by division with remainder. Then 
 \[
   (TR)f(n;i,j)=0 \iff Vf(n;i,j)=0.
 \]
 If $V$ is the zero operator, then
 we are done, otherwise we proceed recursively to show that $Vf(n;i,j)=0$
 (compute $U',V'$ with $TV=U'T+V'$, observe that $(TV)f(n;i,j)=0$ iff $V'f(n;i,j)=0$, and so on.)
 As $T$ has constant coefficients and, for any~$d>0$, the commutation rules 
 in $\set Q(n,i,j)[S_n,S_i,S_j]$ are such that
 $S_in^d=n^dS_i$, $S_jn^d=n^dS_j$ and $S_nn^d=n^dS_n+\mathrm{O}(n^{d-1})$
 (and similarly for $i$ and $j$ in place of~$n$),
 it follows that the degree of $V$ with respect to $n,i,j$ will be strictly smaller than
 the degree of $R$ with respect to these variables.
 Therefore, the recursion must eventually come to an end.

 \subsection{Nice idea, but\dots}

 At this point, we know that all we need for proving the conjecture is a quasi-holonomic
 annihilating operator for~$f(n;i,j)$. We know how to search for such operators, 
 and once empirically discovered, we know how to verify them. 

 Unfortunately, it turned out that if a quasi-holonomic annihilating operator for $f(n;i,j)$
 exists at all, then it must be quite large. It was shown~\cite{kauers07v} that there is no 
 such operator of order up 
 to~$8$ in either direction with polynomial coefficients of total degree at most~$6$. 
 Increasing the bounds on order and degree further might, of course, help, but this 
 is beyond our current computing capabilities. 
 (For the above assertion, a dense linear system with several thousand 
 variables and equations had to be solved exactly.)

 \section{A Takayama-Style Approach}

 By making an ansatz, we could not find a quasi-holonomic annihilating operator, but we could find
 (and verify) plenty of other operators, $R_1,R_2,R_3,\dots$ that were not of the quasi-holonomic type. 
 Once it has been verified that these $R_i$ are indeed annihilating operators, we may of course
 freely choose any operators $P_1,P_2,P_3,\dots$, and the combined operator
 \[
   P_1R_1+P_2R_2+P_3R_3+\cdots
 \]
 will again be an annihilating operator. In other words, all
 annihilating operators form a left ideal in the corresponding algebra. Our
 next step is to find a quasiholonomic combination of the operators
 $R_1,R_2,R_3,\dots$ that were found (and verified) by the method of
 the previous section.

 \subsection{Takayama's Algorithm}

 Assume we want to find a recurrence for the sum
 \[
  \sum_k f(k,n).
 \]
 In his ``holonomic systems approach'' \cite{zeilberger90} the third-named
 author proposes to search for an annihilating operator $R$ of
 $f(n,k)$ of the form
 \[
  R(n,S_k,S_n)=P(n,S_n)+(S_k-1)Q(n,S_k,S_n).
 \]
 Summing over $k$ shows (in case of natural boundaries which we will
 assume in the following) that $P(n,S_n)$ annihilates the sum.
 Starting with the annihilator of the summand~$f(n,k)$ in $A=\set
 Q(k,n)[S_k,S_n]$, i.e.  $\Ann_A f\subseteq A$, one computes an
 $R(n,S_k,S_n)\in\Ann_A f$ free of $k$ (e.g., by elimination via
 Gr\"obner bases). Any such $R$ can be brought to the desired form as
 above.

 Almkvist \cite{almkvist90} observed that in the above setting the
 constraint for $Q$ can be released: The whole proof would go through
 in the same way if additionally $Q$ depends on~$k$. This fact is exploited 
 in Takayama's algorithm \cite{takayama90,takayama90a} which
 originally was formulated only in the context of the Weyl algebra.
 Chyzak and Salvy \cite{chyzak98} extended the algorithm to more
 general Ore algebras (which include also the shift case that we are
 dealing with) and proposed some optimizations. The idea in short is
 the following: While in the algorithm above, first $k$ was eliminated
 and then the part $(S_k-1)Q$ was removed (which corresponds to divide
 out the right ideal $(S_k-1)A$), the order is now reversed.
 In Takayama's algorithm we first reduce modulo $(S_k-1)A$ and then
 perform the elimination of~$k$. The algorithm usually leads to
 shorter recurrences since we allow more freedom for~$Q$.  Second, the
 elimination is in general much faster since we got rid of~$Q$ from the
 very beginning. Note that $Q$ is not computed at all so we have to
 assure natural boundaries a priori.

 There is one technical complication in this approach. The fact that we
 are computing in a noncommutative algebra restricts us in the
 computations after having divided out the right ideal $(S_k-1)A$.
 In particular, we are no longer allowed to multiply by $k$
 from the left.  We can easily convince ourselves that otherwise we
 would get wrong results: Assume we have written an operator already
 in the form $P+(S_k-1)Q$. Multiplying it by $k$ and then reducing it
 by $(S_k-1)A$ leads to $kP-Q$ since we have to rewrite
 $k(S_k-1)$ as $(S_k-1)(k-1)-1$. Because $k$ does not commute with
 $S_k-1$ we get the additional term $-Q$ in the result which we lose
 if we first remove $(S_k-1)Q$ and then multiply by $k$.

 In order to find a $k$-free operator one needs an elimination
 procedure that avoids multiplying by~$k$. Let now
 $R_1,\dots,R_m\in A$ be the operators which generate~$\Ann_A f$,
 and let $R_1',\dots,R_m'\in\set Q(k,n)[S_n]$ be the corresponding
 reductions modulo $(S_k-1)A$.  For $1\leq i\leq m$ we can write
 $$R_i'(k,n,S_n)=R_{i,0}(n,S_n)+R_{i,1}(n,S_n)k+R_{i,2}(n,S_n)k^2+\dots$$
 where $R_{i,j}\in A':=\set Q(n)[S_n]$. Elimination of~$k$ now amounts to 
 finding a linear combination
 $$P_1(n,S_n)\left(\begin{array}{c}R_{1,0}\\ R_{1,1}\\ R_{1,2}\\ \vdots\end{array}\right)+\dots
  +P_m(n,S_n)\left(\begin{array}{c}R_{m,0}\\ R_{m,1}\\ R_{m,2}\\ \vdots\end{array}\right)
  =\left(\begin{array}{c}P(n,S_n)\\ 0\\ 0\\ \vdots\end{array}\right)$$
 for some $P_1,\dots,P_m\in A'$. The vector on the right hand side
 corresponds to the desired $k$-free operator. In general,
 this will not work yet since we can not expect to succeed in the
 elimination without multiplying by $k$ at all. Hence we also have 
 to include multiples of the $R_i$ by powers of $k$. More algebraically
 speaking, the computations take place in an $A'$-module $M$ that is
 generated by the above vectors plus all elements 
 $k^jR_i\!\mod (S_k-1)A,1\leq i\leq m,j=1,2,\dots$.
 The elimination is achieved by
 computing a Gr\"obner basis of this module. Note that $P\in M$ if and
 only if there exists a $Q\in A$ such that $P+(S_k-1)Q\in\Ann_A f$. For
 practical purposes we have to truncate the module $M$ by considering
 only elements up to a certain length $d$, i.e., which have zeros
 in all positions greater than $d$.
 The most natural choice for $d$ is the highest power $k^d$
 that appears in $R_1,\dots,R_m$. But we are not
 guaranteed that for any $P,Q\in A$ with $P+(S_k-1)Q\in\Ann_A f$ the
 operator~$P$ is an element of the truncated module.  In the unlucky
 case that no $k$-free operator was found, the bound $d$ has to be
 increased. The algorithm works similar in the case of multiple sums
 where we want to eliminate several variables $k_1,\dots,k_r$.

 \subsection{Proof of Gessel's conjecture}

 Now back to Gessel's conjecture: Recall that we were looking for a
 quasi-holonomic operator
 \[
   R(n,i,j,S_n,S_i,S_j)=P(n,S_n)+i Q_1(n,i,j,S_n,S_i,S_j)
                                +j Q_2(n,i,j,S_n,S_i,S_j)
 \]
 where we are mainly interested in $P$, because $Q_1$ and $Q_2$ anyway
 vanish when we set $i$ and $j$ to 0.  This task is very similar to
 the setting in the previous section and with slight modifications we
 can apply Takayama's algorithm to solve it.  The only difference is
 that now~$i$ and~$j$ play the role of $S_k-1$, and instead of~$k$, we
 want to eliminate the operators~$S_i$ and~$S_j$. Consequently we have
 to consider the $\set Q(n)[S_n]$-module which is generated by
 $\{S_i^{e_1}S_j^{e_2}|e_1,e_2=0,1,\dots\}$. 

 For our concrete application, we started with a set of 16
 annihilating operators for $f(n;i,j)$.  These operators were found by
 the ansatz described in section \ref{Guess} and verified as proposed
 in section \ref{VerifyGuess}.  The maximal degree w.r.t.\ $i$ as well
 as the maximal degree w.r.t.\ $j$ is~4. Some of the operators had
 degree less than 4 in $i$ or $j$, hence we had to add their
 corresponding multiples to the set of annihilating operators (which
 after this step consisted of 24 elements). Next we performed the
 substitution $i=0$ and $j=0$.  Finally, the elimination of $S_i$ and
 $S_j$ using Gr\"obner bases took about 30 hours and resulted in an
 operator $P(n,S_n)$ of order 32 and polynomial coefficients of degree
 172 in~$n$. (This is the operator posted on our website.)

 As already pointed out, Takayama's algorithm does not deliver $Q_1$
 and $Q_2$. However, in principle the full certificate
 $R(n,i,j,S_n,S_i,S_j)$ can be computed by doing some book-keeping
 during the run of the algorithm. But in the case of Gessel's
 conjecture this extra cost would make our computations not feasible
 with current computers (we would have to wait for a few more Moore-doublings).
 We want to emphasize that, nevertheless, the proof is \emph{completely rigorous}.

 To cite a simple (commutative) analogy, Euclid devised, more than 2300 years ago, an algorithm
 to find the greatest common divisor of two integers. Later mathematicians extended it to
 the generalized Euclidean algorithm that inputs integers $m$ and $n$ and outputs
 not only $d$, the greatest common divisor of $m$ and $n$, but also two other integers
 $a$ and $b$ such that $am+bn=d$. Just because our computers are not fast or big enough
 to actually find these two other integers $a$ and $b$ does not detract from the correctness
 of the output $d$ of the original Euclidean algorithm, and their existence is implied by it.

 \section{Conclusion}

    Computer-aided proofs have come a long way since the hostile reception of the Appel-Haken
 landmark proof of the Four-Color Conjecture. Even as recently as 1998, Hales' breakthrough
 computer-aided proof of Kepler's conjecture was met with skepticism, but it did get published,
 with some reservations, in the prestigious journal {\it Annals of Mathematics}. Both the
 Appel-Haken and Hales theorems are examples of extremely simply-stated statements, whose
 proofs defied, so far, human attepts. While Ira Gessel's conjecture has neither the
 longevity nor the notoriety of the above theorems, it does belong to that genre,
 and we believe that it is very possible that a short human proof does not exist.
 Unfortunately, formally proving this last meta-conjecture would be probably
 much more difficult than
 proving Gessel's conjecture, since proving realistic lower bounds is a notoriously difficult task.
 So we have to resort to empirical sociological testing, using the ingrained greed of human
 mathematicians.
 To that end, the third-named author (DZ) offers a prize of one hundred ($100$) US-dollars for
 a short, self-contained, human-generated (and computer-free) proof of Gessel's conjecture, not
 to exceed five standard pages typed in standard font. The longer that prize would remain unclaimed, the
 more (empirical) evidence we would have that a proof of Gessel's conjecture is indeed beyond the scope of
 humankind.


\end{document}